\definecolor{FOB}{rgb}{0.,0.,0.7}
\definecolor{FOR}{rgb}{0.65,0.,0.}
\definecolor{FOfond}{rgb}{0.98,0.98,0.91}
\DeclareSymbolFont{cyrletters}{OT2}{wncyr}{m}{n}
\DeclareMathSymbol{\Sha}{\mathalpha}{cyrletters}{"58}
\DeclareMathSymbol{\rde}{\mathalpha}{cyrletters}{"64}
\def\bull{\vrule height .9ex width .8ex depth -.1ex }
\newcounter{thenum}
\def\texttheo{\relax}
\newenvironment{theorem}{\medbreak\refstepcounter{thenum}
\noindent\textsc{Theorem} %
\thethenum. \texttheo ---  \it  }{\rm }
\newenvironment{e-proposition}{\medbreak\refstepcounter{thenum}
\noindent\textsc{Proposition} \thethenum. ---  \it  }{\rm }
\newenvironment{e-definition}{\medbreak\refstepcounter{thenum}
\noindent\textsc{Definition} \thethenum. ---  \it  }{\rm }
\newenvironment{lemma}{\medbreak\refstepcounter{thenum}\noindent{\it Lemma} %
\thethenum. ---  \it  }{\rm }
\newenvironment{e-rem}{\medbreak\refstepcounter{thenum}{}%
 \thethenum) }{}
\newenvironment{e-ex}{\medbreak\refstepcounter{thenum}{}%
 \thethenum) }{}
\newenvironment{proof}{\smallbreak\noindent{\sc Proof.} --- \rm}{\quad\bull\smallskip\rm}
\begin{document}

\begin{center}{\LARGE\parindent=0pt{\color{FOR}
A  simple and constructive proof to a  generalization of L\"uroth's theorem

}}
\end{center}
\vskip2cm

\hbox to \hsize{\parindent =0pt\hbox to 2.5cm{\hfill}\hss\vbox{\hsize = 7cm {\large François \textsc{Ollivier}}
\bigskip

LIX, UMR CNRS 7161 

École polytechnique 

91128 Palaiseau \textsc{cedex}

France

\smallskip

{\tiny francois.ollivier@lix.polytechnique.fr}
} \hss\hss
\vbox{\hsize = 7cm {\large Brahim \textsc{Sadik}}
\bigskip

Département de Mathématiques 

Faculté des Sciences Semlalia 

B.P.\ 2390, 40000 Marrakech 

Maroc
\smallskip

{\tiny sadik@ucam.ac.ma}}\hss}
\vskip 0.3cm

\begin{center}\parindent =0pt March 2022
\end{center}
\vfill

{\small
\hbox to \hsize{\hss\vbox{\hsize= 6.2cm\selectlanguage{english}
\noindent \textbf{Abstract.} 
A generalization of L\"uroth's theorem expresses  that
  every transcendence degree $1$ subfield of the rational function
  field is  a  simple extension. In this note we show that a classical
  proof of this theorem also holds to prove this
  generalization.\hfill\break
  \phantom{toto}
  \smallskip

    \noindent Keywords: L\"uroth's theorem, transcendence degree $1$, simple
  extension.
}
\hss\hss

\vbox{\hsize= 6.3cm\selectlanguage{french}
\noindent \textbf{Résumé.} 
Une généralisation du théorème de Lüroth affirme que tout sous-corps
de degré de transcendance $1$ d'un corps de fractions rationnelles est
une extension simple. Dans cette note, nous montrons qu'une preuve
classique permet également de prouver cette généralisation.
  \smallskip

  \noindent Mots-clés: Th.\ de Lüroth,
  degré de transcendance $1$, 
  extension simple.
}\hss}}
\medskip

\noindent{\tiny Authors' extended version of:
  \textsc{Ollivier} (François) and \textsc{Sadik} (Brahim), ``A  simple and constructive proof to a  generalization of Lüroth's theorem'',
  \textit{Turkish Journal of Mathematics}, on line, waiting for inclusion in an
  issue,  2022.
 \hfill
 DOI:~\href{https://journals.tubitak.gov.tr/math/inpress.htm}{10.3906/mat-2110-11}\break

} 

\vfill\eject
\selectlanguage{english}

\section*{Introduction}
\label{Sec:1}

L\"uroth's theorem (\cite{luroth}) plays an important role in the
theory of rational curves. A generalization of this theorem to
transcendence degree $1$ subfields of rational functions field was
proven by Igusa in \cite{igusa}. A purely field theoretic proof of
this generalization was given by Samuel in \cite{samuel}. In this note
we give a simple and constructive proof of this result, based on a
classical proof \cite[10.2~p.218]{waerden}.\\ Let $k$ be a field and
$k(x)$ be the rational functions field in $n$ variables
$x_1,\ldots,x_n$. Let $\cal K$ be a field extension of $k$ that is a
subfield of $k(x)$. To the subfield $\cal K$ we associate the prime
ideal ${\Delta({\cal K})}$ which consists of all polynomials of ${\cal
  K}[y_1,\ldots,y_n]$ that vanish for $y_1=x_1,\ldots,y_n=x_n$. When
the subfield $\cal K$ has transcendence degree $1$ over $k$, the
associated ideal is principal. The idea of our proof relies on a
simple relation between coefficients of a generator of the associated
ideal ${\Delta({\cal K})}$ and a generator of the subfield $\cal
K$. When $\cal K$ is finitely generated, we can compute a rational
fraction $v$ in $k(x)$ such that ${\cal K} = k(v)$.  For this, we use
some methods developped by the first author in \cite{ollivier} to get
a generator of ${\Delta({\cal K})}$ by computing a Gr\"obner basis or
a characteristic set.

\section*{Main result}
\label{Sec:2}
Let $k$ be a field and $x_1,\ldots,x_n,y_1,\ldots,y_n$ be $2n$
indeterminates over $k$. We use the notations $x$ for
$x_1,\ldots,x_n$ and $y$ for $y_1,\ldots,y_n$.  
If  $\cal K$ is a field extension of $k$ in $k(x)$ we define the ideal
${\Delta({\cal K})}$ to be the prime ideal of all polynomials in
${\cal K}[y]$ that vanish for $y_1=x_1,\ldots,y_n=x_n$.  
$${\Delta({\cal K})} = \{P\in {\cal K}[y]\,\,\, : \,\,\,
P(x_1,\ldots,x_n) = 0\}.$$ 

\begin{lemma}\label{delta}
Let $\cal K$ be a field extension of $k$ in $k(x)$ with transcendence
degree $1$ over $k$.

i) The ideal $\Delta({\cal K})$ is principal in
${\cal K}[y]$.

ii) If ${\cal K}_{1}\subset {\cal K}_{2}$ and $\Delta(K_{i})={\cal
  K}_{i}[y]G$, for $i=1,2$, then ${\cal K}_{1}={\cal K}_{2}$.

iii) $\Delta({\cal K})=\tilde\Delta({\cal
  K}):=(p(y)-p(x)/q(x)q(y)|p/q\in{\cal K})$. 

iv) The ideal $\hat\Delta({\cal K}):=k[x]\Delta({\cal K})\cap k[x,y]$
is a radical ideal, which is equal to $(q(x)p(y)-p(x)q(y)|p/q\in{\cal
  K})$.

v) Let $G$ be such that $\Delta({\cal K})=(G)$, with
$G=\sum_{j=0}^{d}p_{j}(x)/q_{j}(x)y^{j}$ and $GCD(p_{j},q_{j})=1$, for
$0\le j\le d$. Let $Q:=PPCM(q_{j}\>|\>0\le j\le d)$, then $\hat G:=QG$ is
such that $G(y,x)=-G(x,y)$ and $\deg_{x}\hat G=\deg_{y}\hat G=d$. 
\end{lemma}
\begin{proof}
i) In the  unique factorization domain ${\cal K}[y]$, the prime ideal
$\Delta({\cal K})$ has codimension $1$. Hence, it is   principal. 

ii) Assume that ${\cal K}_{1}\neq {\cal K}_{2}$. There exists
$p(x)/q(x)\in {\cal K}_{2}$ a reduced fraction, with $p(x)/q(x)\notin
{\cal K}_{1}$. The set $\{1, p(x)/q(x)\}$ may be completed to form a
basis $\{e_{1}=1, e_{2}=p/q, \ldots, e_{s}\}$ of ${\cal K}_{2}$ as a
${\cal K}_{1}$-vector space. Then, $e$ is also a basis of ${\cal
  K}_{2}[y]={\cal K}_{2}{\cal K}_{1}[y]$ as a ${\cal K}_{1}[y]$-module
and $Ge$ is a basis of $\Delta({\cal K}_{2})={\cal K}_{2}\Delta({\cal
  K}_{1})$ as a ${\cal K}_{1}[y]$-module. So, $p(y)-p(x)/q(x)q(y)\in
\Delta({\cal K}_{2})$ is equal to $p(y)e_{1}-q(y)e_{2}$, which implies
that $G$ divides $p$ and $q$, a contradiction.

iii) We remark that $\tilde\Delta({\cal K})$ does not define any prime
component containing polynomials $k[y]$, so that
$\tilde\Delta({\cal K}):k[y]=\tilde\Delta({\cal K})$. The
inclusion $\supset$ is
immediate. Let $P\in\Delta({\cal K})$ with
$P(x,y)=\sum_{j=0}^{s}p_{j}(x)/q_{j}(x)y^{j}$. We have $P(x,x)=0$ and by
symmetry $P(y,y)=0$, so
$P=P(x,y)-P(y,y)=\sum_{j=0}^{s}(p_{j}(x)/q_{j}(x)-p_{j}(y)/q_{j}(y))y^{j}$.
So, throwing away denominators in $k[y]$,
$\prod_{j=1}^{s}q_{i}(y)P\in\tilde\Delta({\cal K})$, so that
$P\in\tilde\Delta({\cal K}):k[y]=\tilde\Delta({\cal K})$,
hence the result.

iv) The ideal $\Delta({\cal K})$ is prime, so that $k(x)\Delta({\cal
  K})$ and $\hat\Delta({\cal K})$ are radical. We remark that $\hat\Delta({\cal K})$ does not define any prime
component containing polynomials $k[x]$ or in $k[y]$, so that
$\hat\Delta({\cal K}):(k[x]k[y])=\hat\Delta({\cal K})$. The inclusion
$\supset$ is immediate. Using the generators $p(y)-p(x)/q(x)q(y)$,
$p/q\in{\cal K}$, a finite set of fractions $\Sigma$ is enough by
Noetherianity, so that $\prod_{p/q\in\Sigma}q(x)\delta({\cal
  K})\subset (p(y)-p(x)/q(x)q(y)|p/q\in{\cal K})$, which provides the
reverse inclusion, using the previous remark.

v) By construction, $\hat G$ is a generator of $\hat\Delta({\cal
  K})$. All the generators of $\hat\Delta({\cal
  K})$ in~iv) being antisymmetric, $\hat G$ is antysymmetric, which
also implies that $\deg_{x}\hat G=\deg_{y}\hat G=d$.
\end{proof}

\begin{theorem}
Let $\cal K$ be a field extension of $k$ in $k(x)$ with transcendence
degree $1$ over $k$. Then, there exists $v$ in $k(x)$ such that ${\cal
  K} = k(v)$. 
\end{theorem}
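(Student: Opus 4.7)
The plan is to adapt the classical proof of L\"uroth's theorem (as in \cite[10.2]{waerden}), replacing the single variable by the tuple $y=(y_1,\ldots,y_n)$ and exploiting the principal generator of $\Delta({\cal K})$ furnished by Lemma~1.

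First, I would use Lemma~1 to pick a generator $F\in{\cal K}[y]$ of $\Delta({\cal K})$, clear $x$-denominators and divide by the $k[x]$-content, obtaining $F^*(x,y)\in k[x,y]$ which is \emph{primitive in $y$} (its coefficients as a polynomial in the monomials $y^\gamma$ have gcd $1$ in $k[x]$). Writing $F^*=\sum_\gamma p_\gamma(x)\,y^\gamma$, the ratios $p_\gamma/p_{\gamma'}$ lie in $\cal K$, and not all can lie in $k$: otherwise $F^*$ would be a $k[x]$-multiple of an element of $k[y]$, forcing $F^*\in k[y]$ by primitivity and contradicting $F^*(x,x)=0$ together with the transcendence of $x$ over $k$. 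I pick such a non-constant ratio and, dividing out the gcd of its numerator and denominator, write the candidate generator as $v=a(x)/b(x)$ with $a,b\in k[x]$ coprime.

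The core identity is the following: for any $u=\alpha/\beta\in{\cal K}$ with $\gcd(\alpha,\beta)=1$, the polynomial $\alpha(y)-u\beta(y)$ lies in $\Delta({\cal K})=(F)$; multiplying by $\beta(x)$ yields $F\mid\alpha(y)\beta(x)-\alpha(x)\beta(y)$ in $k(x)[y]$. Coprimality of $\alpha,\beta$ makes $\alpha(y)\beta(x)-\alpha(x)\beta(y)$ primitive in $y$ over $k[x]$, and Gauss's lemma transfers the divisibility to $k[x,y]$ with a primitive-in-$y$ quotient (up to $k^*$). Applied to $u=v$, this gives $F^*\cdot Q^*(x,y)=\lambda\bigl(a(y)b(x)-a(x)b(y)\bigr)$ with $\lambda\in k^*$; comparing $\deg_{x_i}$ on both sides (using that $a,b$ are, up to a scalar, themselves coefficients of $F^*$) forces $Q^*\in k[y]$, and a symmetric consideration of the $y$-coefficients together with $\gcd(a,b)=1$ in $k[y]$ forces $Q^*\in k^*$. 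Hence $F^*=\mu\bigl(a(y)b(x)-a(x)b(y)\bigr)$ for some $\mu\in k^*$, and $F$ is a ${\cal K}^*$-multiple of $a(y)-v\,b(y)$.

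To finish, I would take any $u\in{\cal K}$ with reduced form $u=\alpha/\beta$ and use $\alpha(y)-u\beta(y)=\bigl(a(y)-v\,b(y)\bigr)g(y)$ in ${\cal K}[y]$. Euclidean division in $k(v)[y]$ gives $\alpha=(a-vb)q_1+r_1$ and $\beta=(a-vb)q_2+r_2$ with $r_1,r_2\in k(v)[y]$ of $y$-degree $<\deg_y(a-vb)$; the factored identity then forces $r_1=u\,r_2$ by degree comparison. Since $r_2\ne0$ (otherwise $\beta(x)=0$, contradicting $\beta\ne0$), $u$ equals the ratio of corresponding $y$-coefficients of $r_1$ and $r_2$, both in $k(v)$. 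Hence $u\in k(v)$, giving $\cal K\subseteq k(v)$ and thus $\cal K=k(v)$. The principal technical obstacle is the step pinning $Q^*$ down to $k^*$: while the $x_i$-degree bound giving $Q^*\in k[y]$ follows the classical template, the further reduction to $k^*$ requires a careful multivariate use of $\gcd(a,b)=1$ as polynomials in $k[y]$, via a Gauss-style content calculus in several variables.
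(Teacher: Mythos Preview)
Your approach matches the paper's almost step for step: take a primitive generator $F^{*}\in k[x][y]$ of $\Delta({\cal K})$, extract a non-constant coefficient ratio $v=a/b$, show that $F^{*}$ divides $D=a(y)b(x)-a(x)b(y)$ in $k[x,y]$, and force the quotient to be a scalar. Your handling of the step $Q^{*}\in k^{*}$ is in fact more explicit than the paper's one-line degree claim, and the content argument via $\gcd(a,b)=1$ in $k[y]$ is the right tool. (Minor slip: $a$ and $b$ are \emph{divisors} of coefficients of $F^{*}$, not coefficients up to scalar; the $\deg_{x_i}$ bound you need still follows.)

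The one genuine gap is your last paragraph. You invoke ``Euclidean division in $k(v)[y]$'' with remainders of $y$-degree strictly below $\deg_y(a-vb)$, followed by a ``degree comparison'' to get $r_1=u\,r_2$. For $n>1$ there is no such division in $k(v)[y_1,\dots,y_n]$, and the comparison does not make sense as stated. The repair is routine: replace Euclidean division by normal-form reduction with respect to a fixed term order on $y$, so that no monomial of $r_1,r_2$ is divisible by the leading monomial of $a-vb$; then $(a-vb)(g-q_1+uq_2)=r_1-u\,r_2$ forces both sides to vanish by a leading-monomial argument, and $r_1=u\,r_2$ with $r_1,r_2\in k(v)[y]$ gives $u\in k(v)$. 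Alternatively, the paper bypasses this whole paragraph: once $F^{*}=\mu D$, the monic generator $G$ has all its coefficients in $k(v)$, so $\Delta({\cal K})$ and $\Delta(k(v))$ share the same generator and one concludes ${\cal K}=k(v)$ directly.
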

\begin{proof}
By lem.~\ref{delta}~i), the prime ideal $\Delta({\cal K})$ of ${\cal
  K}[y]$ is principal. Let $G$ be a monic polynomial such that
$\Delta({\cal K}) = (G)$ in ${\cal K}[y]$. Let $c_0(x),\ldots,c_r(x)$
be the coefficients of $F$ as a polynomial in ${\cal K}[y]$. Since
$x_1,\ldots,x_n$ are transcendental over $k$ there must be a
coefficient $v:=c_{i}$ that lies in ${\cal K}\backslash k$.

Write $v=\frac{f(x)}{g(x)}$ where $f$ and $g$ are relatively prime in
$k[x]$. By lem.~\ref{delta}~v), $\max(\deg_{x} f,\deg_{x} g)\le
d:=\deg_{x} G$. As $g(x)f(y)-f(x)g(y)$ is a multiple of $\hat G$,
$\max(\deg_{x} f,\deg_{x} g)=d$. Let $D := f(y)-vg(y)$. As
$D\in\Delta({\cal K})$, the remainder of the Euclidean division of $G$
by $D$ is also in $\Delta({\cal K})$ and of degree less than the
degree of $G$. It must then be $0$. Therefore $D$ is a generator of
$\Delta({k(v)})$ and of $\Delta({\cal K})$, with $k(v)\subset{\cal
  K}$, and by lem.~\ref{delta}~ii), we need have ${\Delta}({\cal K}) =
\Delta(k(v))$ and ${\cal K} = k(v)$.
 \end{proof}
 
The following result, given by the first author in \cite[prop.~4
  p.~35]{ollivier} and \cite[th.~1]{ollivier2} in a differential
setting that includes the algebraic case, permits to compute a basis
for the ideal ${\Delta}({\cal K})$.
\begin{e-proposition}
Let ${\cal K} = k(f_1,\ldots,f_r)$ where the $f_i=\frac{P_i}{Q_i}$ are
elements of $k(x)$. Let $u$ be a new indeterminate and consider the
ideal   
$${\cal J} =
\left(P_1(y)-f_1Q_1(y),\ldots,P_r(y)-f_rQ_r(y),u\left(\prod_{i=1}^r
Q_i(y) - 1\right)\right)$$  in ${\cal K}[y,u]$. Then \\ 
$${\Delta}({\cal K}) = {\cal J}\cap {\cal K}[y].$$
\end{e-proposition}

\section*{Conclusion}
\label{Sec:3}
 A generalization of L\"uroth's theorem to differential algebra has
 been proven by J. Ritt in \cite{ritt}. One can use the theory of
 characteristic sets to compute a generator of a finitely generated
 differential subfield of the differential field ${\cal F}\langle
 y\rangle$ where $\cal F$ is an ordinary differential field and $y$ is
 a differential indeterminate. In a forthcoming work we will show that
 L\"uroth's theorem can be generalized to one differential
 transcendence degree subfields of the differential field ${\cal
   F}\langle y_1,\ldots,y_n\rangle$.

\end{document}